\newcommand{\N}{\mathbb{N}}
\newcommand{\R}{\mathbb{R}}
\newcommand{\U}{\mathbb{U}}
\DeclareMathOperator{\e}{\mathrm{e}}
\newcommand{\norm}[1]{\left\| #1 \right\|}
\newtheorem{theorem}{Theorem}
\newtheorem{proposition}{Proposition}
\newtheorem{corollary}{Corollary}
\newtheorem{lemma}{Lemma}
\begin{document}
\title{M\"obius orthogonality of sequences with maximal entropy}

\author{Michael Drmota}
\email{michael.drmota@tuwien.ac.at}
\address{Institut f\"ur Diskrete Mathematik und Geometrie
TU Wien\\
Wiedner Hauptstr. 8--10\\
1040 Wien, Austria}
\author{Christian Mauduit}
\author{Jo\"el Rivat}
\email{joel.rivat@univ-amu.fr}
\address{Universit\'e d'Aix-Marseille\\
Institut de Math\'ematiques de Marseille\\
CNRS UMR 7373\\
163, avenue de Luminy, Case 907\\
13288 MARSEILLE Cedex 9, France}
\author{Lukas Spiegelhofer}
\email{lukas.spiegelhofer@tuwien.ac.at}
\address{Institut f\"ur Diskrete Mathematik und Geometrie
TU Wien\\
Wiedner Hauptstr. 8--10\\
1040 Wien, Austria}
%
\subjclass[2000]{Primary: 11A63, 11L03, 11N05, Secondary: 11N60, 11L20, 60F05}

\keywords{sum-of-digits function, primes, exponential sums}
\thanks{The first and fourth author is supported by the
Austrian Science Foundation FWF, SFB F55-02 ``Subsequences of Automatic Sequences and Uniform Distribution''.
This work was supported by the joint ANR-FWF-project  ANR-14-CE34-0009, I-1751 MuDeRa.}

\begin{abstract}
We prove that strongly $b$-multiplicative functions of modulus $1$ along squares are asymptotically orthogonal to the M\"obius function.
This provides examples of sequences having maximal entropy and satisfying this property.
\end{abstract}

\maketitle

\section{Introduction}
Sarnak's conjecture~\cite{S2015,S2012} is concerned with the M\"obius $\mu$-function, defined by $\mu(n)=(-1)^{\omega(n)}$ is $n$ is squarefree, and $\mu(n)=0$ otherwise, where $\omega(n)$ is the number of different prime factors of $n$.
It can also be defined as the Dirichlet inverse of the constant function $1$.
Sarnak's conjecture states that every bounded \emph{deterministic} sequence $f:\mathbb N\rightarrow \mathbb C$ is orthogonal to the M\"obius function,
\[\sum_{n<N}\mu(n)f(n)=o(N).\]
Deterministic sequences $f$ can be defined by the property that for all $\varepsilon>0$, the set of $k$-tuples 
\[\bigl\{\bigl(f(n+0),\ldots,f(n+k-1)\bigr):n\geq 0\bigr\}\subseteq\mathbb C^k\]
can be covered by $\exp(o(k))$ many balls of radius $\varepsilon$, as $k$ goes to infinity. 
For functions $f$ having values in a finite set, it is equivalent to demand that $f$ has subexponential \emph{factor complexity} $p_k$: the number of contiguous finite subsequences of $f$ of length $k$ should be bounded by $\exp(o(k))$.
For example, this is the case for all
\emph{automatic sequences}~\cite{AS2003},
which have a factor complexity bounded by $C k$
(where $C>0$ is a constant depending on the sequence),
and Sarnak's conjecture has been verified for this class of sequences by M\"ullner~\cite{M2017}.
Sarnak's conjecture has been verified for other classes of sequences, see for example~\cite{Bourgain2013a, Bourgain2013, BSZ2013,
Davenport1937,DK2015, FM2015, Green2012, GT2012, EKL2016, HKLD2014, ELD2014, HLD2017, HLD2017b, Karagulyan2015, KL2015, LS2015, Peckner2015, SU2015, Veech2016}.

In this work, we are concerned with M\"obius orthogonality for \emph{non-deterministic} sequences --- in particular, we are concerned with the \emph{normal sequence} $\mathbf t(n^2)$, where $\mathbf t$ is the \emph{Thue--Morse sequence}.

It is known that there exist (many) normal sequences that are M\"obius disjoint;
in particular, \emph{each} measure-theoretic dynamical system $(X,\mathcal B, \lambda,T)$ is almost everywhere M\"obius orthogonal: for each $f\in L^1(X)$ we have
\begin{equation}\label{eqn_dynamical_system}
\lim_{N\to\infty}\frac1N\sum_{n\leq N} f(T^nx)\mu(n)=0
\end{equation}
for almost all $x\in X$
\cite{S2015}; see~\cite{HKLD2014} for a proof and~\cite{E2015} for a polynomial extension.
Considering the Bernoulli shift on $\{0,1\}^{\mathbb Z}$ we obtain many normal sequences with the desired property.
Moreover, M\"obius orthogonality for dynamical systems having large positive entropy (close to the maximal value) was considered recently by Downarowicz and Serafin~\cite{DS2019,DS2020}.
The added value of our paper lies in an explicit, simple construction of a normal number that is M\"obius orthogonal.
We thank Mariusz Lema\'nczyk for pointing out this remark to us.

The Thue--Morse sequence can be defined via the binary sum-of-digits function $s_2$, which counts the number of powers of two needed to represent a natural number as their sum.
We define $\mathbf t(n)=(-1)^{s_2(n)}$, which is the Thue--Morse sequence on the two symbols $1,-1$.
This sequence is automatic and as such has factor complexity $p_k\leq Ck$;
however, when we extract the subsequence along the squares, the resulting sequence is normal. That is, each finite word of length $k$ on $\{1,-1\}$ occurs with asymptotic frequency $2^{-k}$ along this subsequence.
This has been proved by the first three authors~\cite{DMR2019}, strengthening a result of Moshe~\cite{M2007}, who showed that each block $b\in\{1,-1\}^k$ occurs at least once in $\mathbf t(n^2)$.

Besides providing an example of a sequence having maximal topological entropy and being orthogonal to $\mu$,
our interest in the sum $\sum_{n<N}\mu(n)\mathbf t(n^2)$ has its origin in the study of the digits of prime numbers.

The second and third authors~\cite{MR2010} proved in particular that the base-$b$ sum-of-digits of prime numbers is uniformly distributed in residue classes;
this was accomplished by studying the sum $\sum_{n<N}\Lambda(n)\exp(2\pi i\vartheta s_b(n))$,
where $\Lambda$ is the von Mangoldt function 
(defined by $\Lambda(n)=\log p$ if $n=p^k$ with $k\in\N$, $k\geq 1$ 
and $\Lambda(n) = 0$ otherwise).
Moreover, the same authors~\cite{MR2009} studied the sum of digits of the sequence of squares.
It is therefore a natural problem to attack the sum of digits of \emph{squares of primes}; for the Thue--Morse sequence, this can be accomplished by studying the sum $\sum_{n<N}\Lambda(n)\mathbf t(n^2)$.
For the time being, we do not have a solution for this problem; a replacement is the (easier) sum $\sum_{n<N}\mu(n)\mathbf t(n^2)$.

\subsection{Notation}
We denote by $\N$ the set of non-negative integers, 
and by $\U$ the set of complex numbers of modulus~$1$.
For $n\in\N$, $n\geq 1$, we denote by 
$\tau(n)$ the number of divisors of $n$, by 
$\omega(n)$ the number of distinct prime factors of $n$, by 
and by $\mu(n)$ the M\"obius function
(defined by $\mu(n)= (-1)^{\omega(n)}$ if $n$ is squarefree and
$\mu(n)=0$ otherwise).

For $x\in\R$ we denote 
by $\pi(x)$ the number of prime numbers less or equal to $x$,
by $\norm{x}$ the distance of $x$ to the nearest integer,
and we set $\e(x) = \exp(2i\pi x)$.
If $f$ and $g$ are two functions taking strictly positive values such
that $f/g$ is bounded, we write $f=O(g)$ or $f\ll g$.

Furthermore let $s_2(n)$ denote the binary sum-of-digits function.

\subsection{Main Result}

Let $t(n) = s_2(n) \bmod 2$ denote the Thue--Morse sequence on the alphabet $\{0,1\}$.
It has been shown by the three first authors that subsequence $t(n^2)$ is a normal sequence, that is,
each binary block $B\in \{0,1\}^L$, $L\ge 1$, appears with asymptotic frequency $2^{-L}$ as
a factor in $t(n^2)$. In particular this shows that $t(n^2)$ has maximal (positive) entropy $\log 2$.

The main purpose of this paper is to show that $t(n^2)$ is orthogonal to the M\"obius function.
\begin{theorem}\label{Th1}
Let $t(n)$ denote the Thue--Morse sequence. Then we have, as $N\to\infty$,
\begin{equation}\label{eqTh1}
\sum_{n< N} \mu(n) t(n^2) = o(N).
\end{equation}
\end{theorem}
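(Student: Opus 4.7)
The plan is to convert the sum into a classical exponential sum and then apply the Mauduit--Rivat circle-method machinery combined with a Vaughan-type decomposition of $\mu$. First, note that $t(n) \in \{0,1\}$, so $\mathbf{t}(n) = 1 - 2 t(n) = \e(s_2(n)/2)$, and hence
\[
\sum_{n<N} \mu(n) t(n^2) = \frac12 \sum_{n<N} \mu(n) - \frac12 \sum_{n<N} \mu(n) \e\!\left(\tfrac12 s_2(n^2)\right).
\]
The first sum is $o(N)$ by the prime number theorem, so it suffices to prove $\sum_{n<N}\mu(n)\e(\vartheta s_2(n^2)) = o(N)$ for $\vartheta=1/2$; in fact, since the function $n\mapsto \e(\vartheta s_2(n))$ is strongly $2$-multiplicative of modulus $1$, we would aim to establish the bound uniformly for all $\vartheta\in[0,1)$, matching the claim in the abstract.

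Next, I would apply Vaughan's identity (or a Heath-Brown-style identity) to decompose the sum $\sum_{n<N}\mu(n)F(n)$, with $F(n)=\e(\vartheta s_2(n^2))$, into a bounded number of Type~I sums
\[
\sum_{d\le D}a_d\sum_{m\le N/d} F(dm)
\quad\text{with}\quad D\le N^{\alpha},
\]
and Type~II sums
\[
\sum_{N^{\alpha}<d\le N^{1/2}}\sum_m a_d b_m F(dm)
\]
with $|a_d|,|b_m|\ll \tau(n)^{O(1)}$. Everything then reduces to non-trivial cancellation in each of these two families for the specific function $F$.

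For the Type~I estimate one must show, uniformly in $d$, that $\sum_{m<M}\e(\vartheta s_2(d^2 m^2)) = o(M)$. This is essentially a Mauduit--Rivat-type bound for $s_2$ along the quadratic polynomial $P(m)=d^2 m^2$, analysed by Fourier expansion of the truncated sum-of-digits together with van der Corput's inequality and careful control of the carry propagation when one squares a multiple of $d$. For the Type~II estimate, a Cauchy--Schwarz followed by a van der Corput shift converts the sum into
\[
\sum_{d,r} \Bigl|\sum_m \e\!\bigl(\vartheta\bigl[s_2(d^2(m+r)^2)-s_2(d^2 m^2)\bigr]\bigr)\Bigr|,
\]
where the difference inside the exponential is, modulo carries, controlled by the linear polynomial $d^2 r (2m+r)$. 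The required cancellation again follows from the Fourier/truncation approach to $s_2$ developed in the Mauduit--Rivat work on primes and squares and in the normality proof of $\mathbf{t}(n^2)$ by Drmota--Mauduit--Rivat.

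The main obstacle is unquestionably the Type~II estimate: the bilinear structure forces us to treat $s_2$ along the \emph{two-parameter} family $(d,m)\mapsto d^2 m^2$, and we need cancellation that is uniform in both the modulus-like parameter $d$ and the van der Corput shift $r$. Squaring intertwines the digit structure in a way that does not respect either variable cleanly, and the carry-propagation estimates of Drmota--Mauduit--Rivat have to be refined to give quantitative savings that survive the Cauchy--Schwarz loss and the summation over $d$. Once that refinement is in place, combining the Type~I and Type~II bounds with Vaughan's identity and choosing $\alpha$ appropriately yields the desired $o(N)$ conclusion.
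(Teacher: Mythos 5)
Your opening reduction to $\sum_{n<N}\mu(n)\e(\tfrac12 s_2(n^2))=o(N)$ is correct, but the route you then take --- Vaughan's identity with Type~I and Type~II sums --- runs into precisely the obstacle this paper is designed to avoid, and you name it without resolving it. The bilinear Type~II estimate for $\e(\vartheta s_2(d^2m^2))$, uniform in both parameters $d$ and $m$ and surviving the Cauchy--Schwarz loss, is essentially the missing ingredient for the harder sum $\sum_{n<N}\Lambda(n)\mathbf t(n^2)$, which the authors explicitly state is open; the M\"obius-weighted sum is attacked precisely because it admits a reduction that bypasses bilinear sums altogether. So ``once that refinement is in place'' is not a deferred technicality but the unsolved heart of the problem, and your argument has a genuine gap there.

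What the paper does instead: by the Daboussi--K\'atai / Bourgain--Sarnak--Ziegler criterion, $\sum_{n<N}\mu(n)g(n^2)=o(N)$ follows once one shows $\sum_{n<N}g(p^2n^2)\overline{g(q^2n^2)}=o(N)$ for all sufficiently large distinct primes $p\neq q$. Since $g$ is strongly $b$-multiplicative, this correlation equals $\sum_{n<N}f(n^2)$ with $f(u)=g(p^2u)\overline{g(q^2u)}$ --- a single, unweighted sum over squares, to which the Mauduit--Rivat theorem on $\sum_{n<N}f(n^2)\e(\theta n)$ applies. Its two hypotheses are a carry property (easy) and a Fourier property, namely uniform exponential decay of $F_\lambda(t)=b^{-\lambda}\sum_{0\le u<b^\lambda}f(u)\e(-ut)$; the latter is the real work of the paper and is proved by a matrix-product argument on a digraph of residue pairs, using the non-periodicity of $g$. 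If you want to salvage your plan, replace Vaughan's identity by this multiplicative criterion; the exponential-sum technology you cite then only has to handle the one-parameter sum over squares, which is exactly what the existing Mauduit--Rivat machinery delivers.
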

This is actually not the first explicit example of a positive entropy sequence that is orthogonal to the M\"obius function.
A previously considered example is given by the sequence $\mu(n)^2$ (detecting the square-free integers), which has topological  entropy $\frac 6{\pi^2} \log 2$(\cite{P2015,S2015}, see also~\cite{DKKL2018,ALR2015}) and obviously $\mu(n)^2$ is orthogonal to $\mu(n)$.

Nevertheless, our result is one of the first explicit examples of a (binary) sequence with maximal entropy $\log 2$ that has this orthogonality property.

This kind of examples is in particular interesting in view of the 
Sarnak conjecture~\cite{S2015,S2012} which says that every bounded zero entropy sequence is orthogonal to
the M\"obius function. 

In this context, we note that due to normality, the \emph{symbolic dynamical system} $(X,\mathcal B,\lambda,T)$ defined by $t(n^2)$ is the full shift:
clearly, there exist sequences $x\in X$ that are not orthogonal to the M\"obius function.
On the other hand, note that Downarowicz and Serafin~\cite{DS2019,DS2020} study dynamical systems with entropy close to the maximum and still obtain M\"obius orthogonality.
\medskip

The Thue--Morse sequence is sometimes defined by $g(n)=(-1)^{s_2(n)}$.
That is, the values $0,1$ are
replaced by $1$ and $-1$. Since $g(n) = 1- 2t(n)$ and $\sum_{n< N} \mu(n) = o(N)$ the relation
(\ref{eqTh1}) is equivalent to 
\begin{equation}\label{eqTh1-2}
\sum_{n< N} \mu(n) (-1)^{s_2(n)}  = o(N).
\end{equation}
The function $g(n)=(-1)^{s_2(n)}$ is a so-called {\it strongly $2$-multiplicative function}. More
generally, a strongly $b$-multiplicative functions (where $b\ge 2$ is a fixed integer) is defined by the relation
\[
g(kb + a) = g(k) g(a)\qquad (a,k\in \mathbb{N},\, 0\le a < b).
\]
Actually, Theorem~\ref{Th1} can be generalized to all complex valued strongly $b$-multiplicative functions
of modulus $1$.
\begin{theorem}\label{Th2}
Let $b\ge 2$ be a given integer. Then for all complex valued strongly $b$-multiplicative functions $g(n)$ of modulus 
$1$ we have
\begin{equation}\label{eqTh2}
\sum_{n< N} \mu(n) g(n^2) = o(N).
\end{equation}
\end{theorem}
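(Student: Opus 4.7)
My plan is to adapt the Vinogradov--Vaughan method to the sequence $n \mapsto g(n^2)$, using the digit-based Fourier estimates developed in \cite{DMR2019,MR2009,MR2010}.

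Step 1 (Combinatorial decomposition). Applying Vaughan's identity to $\mu$ with a parameter $M = N^\eta$ for a small $\eta > 0$, we decompose
\[
S(N) := \sum_{n<N} \mu(n)\, g(n^2)
\]
as a linear combination (with bounded coefficients) of a \emph{Type~I} sum
\[
T_I = \sum_{m \le M} a_m \sum_{k \le N/m} g\bigl((mk)^2\bigr)
\]
and a \emph{Type~II} sum
\[
T_{II} = \sum_{M < m \le N/M}\;\sum_{k \sim N/m} \alpha_m \beta_k\, g\bigl((mk)^2\bigr),
\]
with $|a_m| \ll \log m$ and $|\alpha_m|,|\beta_k|\le 1$.

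Step 2 (Type~I estimate). For each $m \le M$, I would estimate $\sum_{k \le N/m} g((mk)^2)$ uniformly in $m$. Since $g$ is strongly $b$-multiplicative and $|g|=1$, write $g(n) = \e(\theta(n))$, and replace $g$ by the truncation $g_\lambda(n) := g(n \bmod b^\lambda)$ at a level $\lambda \asymp \log_b N$; the discrepancy $g((mk)^2) - g_\lambda((mk)^2)$ is controlled by a carry propagation argument as in \cite{MR2009}. Expanding $g_\lambda$ in its discrete Fourier series
\[
g_\lambda(n) = \sum_{0 \le h < b^\lambda} \fourier{g_\lambda}(h)\, \e\!\left(\frac{hn}{b^\lambda}\right),
\]
and invoking the $L^1$-type bound on $\fourier{g_\lambda}$ afforded by strong $b$-multiplicativity (cf.\ the Fourier analysis in \cite{MR2010}), the task reduces to bounding the quadratic exponential sums $\sum_k \e(h(mk)^2/b^\lambda)$ by Weyl's inequality.

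Step 3 (Type~II estimate). Applying Cauchy--Schwarz in the $k$-variable,
\[
|T_{II}|^2 \ll N \sum_{m_1, m_2 \sim M} |\alpha_{m_1} \alpha_{m_2}|\, \biggl|\sum_{k \sim N/M} g\bigl((m_1 k)^2\bigr)\, \conjugate{g\bigl((m_2 k)^2\bigr)}\biggr|.
\]
Truncating $g$ as in Step~2 and applying the Fourier expansion in both factors reduces the inner sum to bilinear exponential sums
\[
\sum_{k \sim N/M} \e\!\left(\frac{h_1 (m_1 k)^2 - h_2 (m_2 k)^2}{b^\lambda}\right),
\]
which, for generic $(h_1, h_2, m_1, m_2)$, can be controlled by the quadratic Weyl inequality; averaging over $(m_1, m_2)$ and combining with the $L^1$-bound on $\fourier{g_\lambda}$ produces the desired saving.

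The main obstacle is Step~3. The crucial technical input is a two-scale carry propagation lemma that simultaneously describes the digit structure of $(m_1 k)^2$ and $(m_2 k)^2$ as $k$ varies, generalizing the lemma of \cite{MR2009} (which handles a single phase $\e(\vartheta s_b(n))$) to arbitrary strongly $b$-multiplicative $g$. Once this lemma is in place, together with the $L^1$-bound on $\fourier{g_\lambda}$, one may optimize $\eta$ and $\lambda$ (typically $\eta$ a small absolute constant and $\lambda$ a fixed fraction of $\log_b N$) to conclude $T_I + T_{II} = o(N)$, which yields \eqref{eqTh2}.
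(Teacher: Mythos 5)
Your plan runs into precisely the obstruction that this paper is designed to avoid. Vaughan's identity for $\mu$ produces Type~II bilinear sums in which, after Cauchy--Schwarz, you must bound $\sum_{k} g((m_1k)^2)\overline{g((m_2k)^2)}$ \emph{uniformly} for $m_1,m_2$ ranging over a dyadic block that in the worst case sits near $N^{1/2}$; the summation over $k$ then has length about $N^{1/2}$ while the arguments $(m_ik)^2$ have size about $N^2$. No existing technique controls digital functions of squares on ranges that short relative to the size of the argument --- this ``level of distribution'' barrier is exactly why $\sum_{n<N}\Lambda(n)t(n^2)$ is stated as open in the introduction, and the same barrier appears verbatim in your Step~3. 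Moreover, the quantitative ingredients you invoke cannot close even the full-length case: for the truncation $g_\lambda$ of, say, the Thue--Morse function, the $L^1$ norm $\sum_h|\fourier{g_\lambda}(h)|$ is exponentially large in $\lambda$ (Parseval gives $\sum_h|\fourier{g_\lambda}(h)|^2=1$ while the mass is spread out), hence a positive power of $N$ when $b^\lambda\asymp N^2$; pairing such an $L^1$ bound with Weyl's inequality for $\sum_k\e(h(mk)^2/b^\lambda)$ therefore loses more than it saves. The entire content of the Mauduit--Rivat machinery is a far more delicate substitute for this naive expansion (double truncations, van der Corput and Gauss-sum $L^2$-averages of Fourier transforms), and even that machinery only yields full-length statements such as $\sum_{n<N}f(n^2)\e(\theta n)=o(N)$.

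The paper sidesteps Type~II sums entirely by exploiting the multiplicativity of $\mu$: the Daboussi--K\'atai/Bourgain--Sarnak--Ziegler criterion (Lemma~\ref{lemma:daboussi-katai}) reduces \eqref{eqTh2} to the \emph{full-length} correlations $\sum_{n<N}g(p^2n^2)\overline{g(q^2n^2)}=o(N)$ for distinct large primes $p,q$, and these do fall within the scope of the cited theorem of Mauduit and Rivat applied to $f(u)=g(p^2u)\overline{g(q^2u)}$. The genuinely new work is then to verify the Fourier property for this correlation function, namely uniform exponential decay of its Fourier coefficients $F_\lambda(t)$, which the paper proves by a transfer-matrix argument on a digraph (Proposition~\ref{prp_fourierprop}), together with an easy carry property. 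If you want to salvage your approach, replace Vaughan's identity by this multiplicative criterion; as written, Step~3 is not a technical gap to be filled but the open problem itself.
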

Note that this theorem gives many more examples of sequences with maximal entropy that are orthogonal to M\"obius:
M\"ullner~\cite{M2018} proved in particular that $q$-multiplicative functions with values in $\{\exp(2\pi i j/m):0\leq j<m\}$ are normal along the squares under certain weak conditions.

%

\subsection{Strongly $b$-Multiplicative Functions}

It is clear that strongly $b$-multiplicative functions $g(n)$ of modulus $1$ satisfy
$g(0) = 1$ and that $g(1),\ldots,g(b-1)$ determine all other values of $g(n)$:
\[
g(n) = \prod_{j\ge 0} g(\varepsilon_j)  \quad \mbox{with} \quad
n = \sum_{j\ge 0} \varepsilon_j b^j.
\]

We will distinguish between two different classes of $b$-multiplicative functions, namely periodic ones and non-periodic ones.
\begin{proposition}
A $b$-multiplicative function $g$ having values in $\{z\in\mathbb C:\lvert z\rvert=1\}$ is periodic if and only if
\begin{equation}\label{eqn_periodic}
g(\ell) = g(1)^\ell \quad (0\le \ell \le b-1) \quad\mbox{and}\quad  g(b-1) = 1.
\end{equation}
\end{proposition}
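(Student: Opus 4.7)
The \emph{if} direction is immediate: writing $n = \sum_{j \ge 0} \varepsilon_j b^j$ in base $b$, the hypothesis gives
\[
g(n) = \prod_j g(\varepsilon_j) = \prod_j g(1)^{\varepsilon_j} = g(1)^{s_b(n)}.
\]
Combining the classical congruence $s_b(n) \equiv n \pmod{b-1}$ with $g(1)^{b-1} = 1$, this collapses to $g(n) = g(1)^n$, which is periodic with period dividing $b-1$.

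For the converse, suppose $g$ has period $T \ge 1$ and expand it in the Fourier basis on $\Z/T\Z$:
\[
g(n) = \sum_{j=0}^{T-1} c_j \, \e(nj/T).
\]
Since $g(0) = 1$, some Fourier coefficient $c_{j_0}$ is non-zero, and then $\frac{1}{N} \sum_{n<N} g(n) \e(-nj_0/T) \to c_{j_0}$. On the other hand, a direct expansion in base $b$ yields the factorisation
\[
\sum_{n<b^k} g(n) \, \e(-nj_0/T) = \prod_{i=0}^{k-1} q_i, \qquad q_i := \sum_{\varepsilon=0}^{b-1} g(\varepsilon) \, \e(-\varepsilon b^i j_0/T).
\]
Each $|q_i| \le b$ by the triangle inequality, and since $|c_{j_0}| > 0$, the product $\prod_i (|q_i|/b)$ converges to a positive limit, forcing $|q_i|/b \to 1$.

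Now $q_i$ depends on $i$ only through the angle $\alpha_i := b^i j_0/T \bmod 1$, which lies in the finite set $\{0, 1/T, \dots, (T-1)/T\}$, so the sequence $(\alpha_i)$ is eventually periodic. Every angle $\alpha$ in its periodic part must therefore satisfy $|q(\alpha)| = b$ exactly, and the equality case of the triangle inequality forces the unit-modulus numbers $g(\varepsilon) \, \e(-\varepsilon \alpha)$ with $0 \le \varepsilon < b$ to coincide; since the $\varepsilon = 0$ term equals $1$, we get $g(\varepsilon) = \e(\varepsilon \alpha)$ throughout. In particular $g(1) = \e(\alpha)$ pins down $\alpha$ modulo $1$, so the limit cycle of $(\alpha_i)$ consists of the single point $\alpha_0$; its invariance under $\alpha \mapsto b\alpha \bmod 1$ then gives $(b-1)\alpha_0 \in \Z$. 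The two identities $g(\varepsilon) = \e(\varepsilon \alpha_0) = g(1)^\varepsilon$ and $g(b-1) = \e((b-1)\alpha_0) = 1$ are exactly \eqref{eqn_periodic}. The main obstacle is the setup --- bringing the Fourier inversion on $\Z/T\Z$ into contact with the base-$b$ factorisation of the character sum, and then identifying the limit cycle of the orbit with a single fixed point of $\alpha \mapsto b\alpha$; both required conditions drop out of the one triangle-inequality equality.
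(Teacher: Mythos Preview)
Your proof is correct. The ``if'' direction matches the paper's argument essentially verbatim.

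The ``only if'' direction, however, follows a genuinely different and more elementary route than the paper. The paper's proof invokes its later Proposition~\ref{prp_fourierprop} (uniform exponential decay of the discrete Fourier transform, established via a matrix recurrence and a digraph connectivity argument) to conclude that all Fourier coefficients of $g$ on $\Z/L\Z$ vanish, contradicting $\lvert g\rvert=1$. You bypass this machinery entirely: you start from the same product factorisation $\sum_{n<b^k} g(n)\,\e(-nj_0/T)=\prod_i q_i$, but instead of proving decay you exploit \emph{non}-decay. The existence of a nonzero Fourier coefficient forces $\lvert q_i\rvert/b\to 1$; the finiteness of the orbit of $j_0/T$ under $\alpha\mapsto b\alpha\bmod 1$ upgrades this to exact equality $\lvert q(\alpha)\rvert=b$ on the limit cycle; and the equality case of the triangle inequality then pins down $g(\varepsilon)=\e(\varepsilon\alpha)$, collapsing the cycle to a fixed point and yielding~\eqref{eqn_periodic}. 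This argument is self-contained and avoids the forward reference that the paper itself flags. What the paper's approach buys is that Proposition~\ref{prp_fourierprop} is needed anyway for the main theorem, so the reuse costs nothing; what your approach buys is an independent proof that would stand even if the Fourier-decay proposition were unavailable.

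One cosmetic point: you use $\alpha_0$ first implicitly for $j_0/T$ (via $\alpha_i=b^i j_0/T$) and then reuse it for the fixed point of the limit cycle; renaming the latter (say $\alpha_\ast$) would avoid any ambiguity.
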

While the difficult part of the proof (the ``only if''-part) of this statement rests on Proposition~\ref{prp_fourierprop} proved later, we will not use this direction in the sequel and thus there is no circular argument involved.
\begin{proof}
Suppose first that~\eqref{eqn_periodic} holds, that is, $g(\ell) = \e(\ell j_0/(b-1))$ for some integer $j_0$.
Then
\[    g(n) = \e(n j_0/(b-1))    \]
for all $n\ge 0$. This follows from the fact that $\e(b^j/(b-1)) = \e(1/(b-1))$.
This means that in this case $g(n)$ is periodic with a period dividing $b-1$.
Conversely, suppose, in order to obtain a contradiction, that~\eqref{eqn_periodic} is not satisfied and that $g$ is periodic with period $L$.
By Proposition~\ref{prp_fourierprop} below we have
\begin{equation}\label{eqn_fourier_convergence}
F_\lambda(h)=o(1)
\end{equation}
as $\lambda\rightarrow\infty$, for all $h\in\mathbb Z$.
By periodicity,
\[F_\lambda(h)=\frac 1{b^\lambda}\sum_{0\leq u<b^\lambda}g(u)\e(-hu/L)=\mathcal O(L/b^\lambda)+\frac 1L\sum_{0\leq u<L}g(u)\e(-hu/L)\]
and~\eqref{eqn_fourier_convergence} implies that $\sum_{0\leq u<L}g(u)\e(-hu/L)=0$ for $0\leq h<L$.
By inversion, we obtain $g(u)=0$ for all $u$, which contradicts $\lvert g(u)\rvert=1$. This completes the proof.
\end{proof}

\subsection{Plan of the Proofs}

If $g(n)$ is periodic then Dirichlet's prime number theorem implies Theorem~\ref{Th2}.
Hence, it is sufficient to suppose that $g(n)$ is not periodic.

In order to prove Theorem~\ref{Th2} we apply the Daboussi--K\'atai
criterion (Lemma~\ref{lemma:daboussi-katai} below). 
This criterion says that 
\begin{equation}\label{eqs2s2}
\sum_{n<N} g(p^2 n^2) \overline{ g(q^2 n^2) }  = o(N),
\end{equation}
where $p,q$ are different (and sufficiently large) prime numbers, implies Theorem~\ref{Th2}.

At this stage we will apply a general theorem by the second and third authors~\cite{MR2018} that
gives sufficient conditions for functions $f(n)$ (with $|f(n)|\le 1$) such that
\[
\sum_{n< N} f(n^2)\e(\theta n) = o(N).
\]
In our case we want to apply this theorem for 
\[
f(n) = g(p^2 n) \overline{ g(q^2 n) }
\]
and $\theta = 0$. In particular one has to check a {\it carry property} and
a {\it Fourier property}. In our case the carry property is easy to check (see Section~\ref{secTh2}),
whereas the Fourier property needs non-trivial bounds for the Fourier-terms
\[
F_\lambda(t) = \frac 1{b^\lambda} \sum_{0\le u < b^\lambda} f(u) \e(-ut) = 
\frac 1{b^\lambda} \sum_{0\le u < b^\lambda}  g(p^2 u) \overline{ g(q^2 u) } \e(-ut),
\] 

We will derive the necessary bounds in Section~\ref{sec:Fourier}.
This will be then the main ingredient for the proof of Theorem~\ref{Th2}
which will be summarized in Section~\ref{secTh2}.

%

\section{Fourier bounds}\label{sec:Fourier}
In this section, we are concerned with strongly $b$-multiplicative functions $g:\mathbb N\rightarrow\U$ that do not satisfy~\eqref{eqn_periodic}.
In the main result of this section, Proposition~\ref{prp_fourierprop} below, we will prove that they possess Fourier coefficients $F_\lambda(t)$ that converge to zero uniformly in $t$.

We suppose that $P,Q$ are positive and coprime integers that are also coprime to $b$ --- later we will apply our
results for $P = p^2$ and $Q = q^2$, where $p,q$ are different primes.
In order to obtain upper bounds for $F_\lambda(t)$ we define more generally 
\[
F_\lambda^{i,j}(t) = \frac 1{b^\lambda} \sum_{0\le u < b^\lambda} g(P u+i) \overline{ g(Qu+j) } \e(-ut),
\] 
where $0\le i \le P-1$ and $0\le j \le Q-1$. 

The following recurrence follows directly from the definition.
\begin{lemma}\label{Le1}
Suppose that $P,Q$ are positive and coprime integers that are also coprime to $b$ and
 that $0\le i \le P-1$, $0\le j \le Q-1$, and $\lambda \ge 1$. Then we have for all $t\in \R$
\begin{equation}\label{eqLe1}
F_\lambda^{i,j}(t) = \frac 1b \sum_{r=0}^{b-1}
g(Pr+i \bmod b) \overline{ g(Qr+j \bmod b) } \e(-rt) \,
F_{\lambda-1}^{\left\lfloor \frac {i+rP}b \right\rfloor, \left\lfloor \frac {j+Qr}b \right\rfloor }(bt).
\end{equation}
\end{lemma}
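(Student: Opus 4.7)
The plan is to prove the recurrence by direct manipulation of the defining sum, using the base-$b$ decomposition of $u$ together with the strong $b$-multiplicative property of $g$. No coprimality assumption on $P$, $Q$, $b$ is actually needed for this algebraic identity; those hypotheses presumably come into play only when the recurrence is iterated and bounded later.

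First, I would split the summation index as $u = bv+r$ with $0 \le r < b$ and $0 \le v < b^{\lambda-1}$, which gives a bijection with $\{0,\ldots,b^\lambda-1\}$. Then
\[
Pu+i = bPv + (Pr+i), \qquad Qu+j = bQv + (Qr+j),
\]
and writing $Pr+i = b\lfloor (Pr+i)/b\rfloor + ((Pr+i)\bmod b)$ (and analogously for $Qr+j$), I obtain
\[
Pu+i = b\bigl(Pv+\lfloor (i+rP)/b\rfloor\bigr) + \bigl((Pr+i)\bmod b\bigr),
\]
which exhibits the last base-$b$ digit of $Pu+i$ explicitly. Applying the strong $b$-multiplicative relation $g(bk+a)=g(k)g(a)$ for $0 \le a < b$ then yields
\[
g(Pu+i) = g\bigl(Pv+\lfloor (i+rP)/b\rfloor\bigr)\, g\bigl((Pr+i)\bmod b\bigr),
\]
and a symmetric factorization holds for $g(Qu+j)$. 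Meanwhile the additive character factorizes as $\e(-ut)=\e(-vbt)\e(-rt)$.

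Second, I would substitute these factorizations into the definition of $F_\lambda^{i,j}(t)$ and interchange the finite sums over $r$ and $v$. The $r$-dependent factors $g((Pr+i)\bmod b)\,\overline{g((Qr+j)\bmod b)}\,\e(-rt)$ pull out of the inner sum, and what remains is
\[
\frac{1}{b^{\lambda-1}}\sum_{0\le v < b^{\lambda-1}} g\bigl(Pv+\lfloor (i+rP)/b\rfloor\bigr)\,\overline{g\bigl(Qv+\lfloor (j+Qr)/b\rfloor\bigr)}\,\e(-vbt),
\]
which is exactly $F_{\lambda-1}^{\lfloor (i+rP)/b\rfloor,\lfloor (j+Qr)/b\rfloor}(bt)$. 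Combining with the prefactor $1/b$ coming from $b^\lambda = b\cdot b^{\lambda-1}$ produces the claimed identity.

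The only bookkeeping point is to verify that the new upper indices lie in the admissible ranges; from $0\le i\le P-1$ and $0\le r\le b-1$ one has $i+rP \le Pb-1$, hence $\lfloor (i+rP)/b\rfloor \le P-1$, and analogously $\lfloor (j+Qr)/b\rfloor \le Q-1$, so the right-hand side is well-defined. I do not foresee any genuine obstacle: the argument is purely combinatorial, and the main care is simply to track the ``last-digit'' and ``carry'' parts of $Pu+i$ and $Qu+j$ correctly.
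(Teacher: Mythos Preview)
Your proof is correct and follows essentially the same approach as the paper: split $u=bv+r$, use the strong $b$-multiplicativity to peel off the last digit of $Pu+i$ and $Qu+j$, and recognize the remaining inner sum as $F_{\lambda-1}^{\lfloor(i+rP)/b\rfloor,\lfloor(j+rQ)/b\rfloor}(bt)$. Your additional remarks---that the new indices stay in range and that the coprimality hypotheses are not needed for this identity---are accurate and make the write-up slightly more complete than the paper's version.
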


\begin{proof}
By distinguishing  between residue classes modulo $b$ we obtain
\begin{align*}
F_\lambda^{i,j}(t) &= \frac 1{b^\lambda} \sum_{r=0}^{b-1} \sum_{0\le u < b^{\lambda-1}}
g(P(bu+r)+i) \overline{ g(Q(bu+r)+j) } \e(-(bu+r)t)   \\
&= \frac 1{b^\lambda} \sum_{r=0}^{b-1} 
g(Pr+i \bmod b) \overline{ g(Qr+j \bmod b) } \e(-rt) \\
&\qquad \qquad \times 
\sum_{0\le u < b^{\lambda-1}} g(bPu + b \lfloor (Pr+i)/b \rfloor ) \overline{g(bQu + b \lfloor (Qr+j)/b \rfloor ) } \e(-but) \\
&= \frac 1b \sum_{r=0}^{b-1}
g(Pr+i \bmod b) \overline{ g(Qr+j \bmod b) } \e(-rt) 
F_{\lambda-1}^{\left\lfloor \frac {i+rP}b \right\rfloor, \left\lfloor \frac {j+Qr}b \right\rfloor }(bt)
.
\end{align*}
\end{proof}

Actually we are interested in the behaviour of $F_\lambda^{0,0}(t) = F_\lambda(t)$. Thus, we have
to study the action 
\[
T: (i,j) \to \left\{ \left( \left\lfloor \frac ib \right\rfloor , \left\lfloor \frac jb \right\rfloor \right),
\left( \left\lfloor \frac {i+P}b \right\rfloor , \left\lfloor \frac {j+Q}b \right\rfloor \right),
\ldots, 
\left( \left\lfloor \frac {i+(b-1)P}b \right\rfloor , \left\lfloor \frac {j+(b-1)Q}b \right\rfloor \right)
 \right\},
\]
where we start with $(0,0)$. In this context it is convenient to consider the di-graph $D$ with vertices 
$(i,j)$ ($0\le i \le P-1$, $0\le j \le Q-1$) and edges 
\[
(i,j) \to \left( \left\lfloor \frac ib \right\rfloor , \left\lfloor \frac jb \right\rfloor \right), \ 
(i,j) \to \left( \left\lfloor \frac {i+P}b \right\rfloor , \left\lfloor \frac {j+Q}b \right\rfloor \right),
\ldots, 
(i,j) \to \left( \left\lfloor \frac {i+(b-1)P}b \right\rfloor , \left\lfloor \frac {j+(b-1)Q}b \right\rfloor \right).
\]

\begin{lemma}\label{Le2}
Let $\mathcal{C}$ denote the strongly connected component of the di-graph $D$ that contains $(0,0)$.
Then $\mathcal{C}$ contains precisely $P+Q-1$ elements that can be also represented by
\[
\mathcal{C} = \left\{ \left( \lfloor tP \rfloor, \lfloor tQ \rfloor  \right): 0\le t < 1 \right\}.
\]
In particular if $(i,j)\in \mathcal{C}$ and $(i,j)\ne (P-1,Q-1)$ then either $(i+1,j)\in \mathcal{C}$ or 
$(i,j+1)\in \mathcal{C}$.
\end{lemma}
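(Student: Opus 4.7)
The plan is to give an explicit description of iterated transitions in $D$ and use it to identify both the vertices reachable from $(0,0)$ and those from which $(0,0)$ is reachable; this pins down the strongly connected component $\mathcal{C}$.

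\emph{Step 1 (iterated transition formula).} By induction on $k$, I would show that following $k$ edges from $(i,j)$ labelled by digits $r_1,\ldots,r_k\in\{0,\ldots,b-1\}$ leads to
\[
\left(\floor{\tfrac{i+mP}{b^k}},\,\floor{\tfrac{j+mQ}{b^k}}\right),\qquad m=\sum_{\ell=1}^k r_\ell\,b^{\ell-1}\in\{0,\ldots,b^k-1\}.
\]
The induction step rests on the identities $\floor{x/b^k}+y=\floor{(x+yb^k)/b^k}$ for integers and $\floor{\floor{z}/b}=\floor{z/b}$, applied so that the new digit $r_{k+1}$ is absorbed into $m$ at the weight $b^k$.

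\emph{Step 2 (forward reachability).} Let $S=\{(\floor{tP},\floor{tQ}):0\le t<1\}$. Applying Step~1 with $(i,j)=(0,0)$ identifies the vertices reachable from $(0,0)$ as
\[
R=\{(\floor{mP/b^k},\floor{mQ/b^k}):k\ge 0,\,0\le m<b^k\}\subseteq S.
\]
The map $t\mapsto(\floor{tP},\floor{tQ})$ is constant on each of the intervals into which the set $\{a/P:1\le a<P\}\cup\{a'/Q:1\le a'<Q\}$ partitions $[0,1)$; since each of these intervals has positive length and the $b$-adic rationals are dense in $[0,1)$, every element of $S$ is attained, hence $R=S$.

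\emph{Step 3 (backward reachability and cardinality).} Step~1 applied with all-zero digits shows that from \emph{every} vertex $(i,j)$ the all-zero path of length $k$ terminates at $(\floor{i/b^k},\floor{j/b^k})=(0,0)$ as soon as $b^k>\max(i,j)$, which is guaranteed by $i<P$ and $j<Q$ once $k$ is large enough. Combined with Step~2 this yields $\mathcal{C}=S$. Since $\gcd(P,Q)=1$ forbids $a/P=a'/Q$ for $1\le a<P$ and $1\le a'<Q$, the two families of discontinuities of $t\mapsto(\floor{tP},\floor{tQ})$ are disjoint, producing $P+Q-2$ distinct transition points and therefore $|S|=P+Q-1$.

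\emph{Step 4 (the neighbor assertion).} The vertex $(P-1,Q-1)$ corresponds precisely to the rightmost interval of the partition above. For any other $(i,j)\in\mathcal{C}$, at the right endpoint of its interval exactly one of $\floor{tP}$, $\floor{tQ}$ increments, producing either $(i+1,j)$ or $(i,j+1)$ in $S=\mathcal{C}$. The main piece of bookkeeping is the nested-floor induction of Step~1; verifying in Step~2 that every one of the $P+Q-1$ level intervals of $S$ meets some $b$-adic rational is routine because the intervals have positive length.
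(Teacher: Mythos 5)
Your proof is correct and follows essentially the same route as the paper: identify the vertices reachable from $(0,0)$ with $\left\{ \left( \lfloor tP \rfloor, \lfloor tQ \rfloor \right) : t \text{ a $b$-adic rational in } [0,1) \right\}$, pass to all of $[0,1)$ by density and the positive length of the level intervals, count those intervals using $\gcd(P,Q)=1$, and return to $(0,0)$ via the all-zero digits. Your Step~1 is in fact a slightly cleaner implementation of the key computation: the exact nested-floor formula $\left\lfloor (i+mP)/b^k \right\rfloor$ in the integer parameter $m$ makes the one-step identity purely formal, whereas the paper parametrizes by real $t$ and must invoke $\gcd(P,b)=1$ to justify its version of that identity.
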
 

\begin{proof}
Clearly we have $(0,0)\in \left\{ \left( \lfloor tP \rfloor, \lfloor tQ \rfloor  \right): 0\le t < 1 \right\}$;
we just have to set $t=0$.

Next we show that for $0\le t < 1$ and for integers $0\le r \le b-1$
\begin{equation}\label{eqidentity}
\left\lfloor \frac{\lfloor tP \rfloor + rP }b \right\rfloor = \left\lfloor \frac{t+r}b P \right\rfloor.
\end{equation}
For this purpose we write $tP = \lfloor tP \rfloor + y$ with $0\le y < 1$. This gives
\[
\frac{\lfloor tP \rfloor + rP }b = \frac{t+r}b P - \frac yb.
\]
Suppose now that for some integer $m$ 
\begin{equation}\label{eqinterval}
m \le \frac{t+r}b P < m + \frac 1b.
\end{equation}
Equivalently this means that
\[
bm-rP \le t < bm-rP+1.
\]
Since $0\le t < 1$ this implies that $bm = rP$. If $1\le r \le b-1$ this is impossible since 
$b$ and $P$ are coprime. Thus we either have $r=m=0$, that is, $tP < 1$ or (\ref{eqinterval}) 
does not hold. In the first case we have $\lfloor tP \rfloor = 0$ (or $y = \lfloor tP \rfloor$)
and consequently (\ref{eqidentity}) is just the trivial identity $0 = 0$ (note that $r=0$).
In the second case (where (\ref{eqinterval}) does not hold) we clearly have
\[
\left\lfloor \frac{t+r}b P - \frac yb \right\rfloor = \left\lfloor \frac{t+r}b P \right\rfloor
\]
so that (\ref{eqidentity}) holds, too.

Clearly (\ref{eqidentity}) remains true if we replace $P$ by $Q$. Hence, if $(i,j)$ is represented
by $(i,j) = \left( \lfloor tP \rfloor, \lfloor tQ \rfloor  \right)$ then for every $0\le r \le b-1$
we have
\[
\left( \left\lfloor \frac {i+rP}b \right\rfloor , \left\lfloor \frac {j+rQ}b \right\rfloor \right)
= \left( \left\lfloor  \frac{t+r}b P  \right\rfloor , \left\lfloor \frac{t+r}b Q  \right\rfloor \right).
\]
Note that $0\le \frac{t+r}b < 1$ so that we stay in the same set. Furthermore if we start with $(0,0)$
represented by $t=0$ then by repeated application it follows that we can reach any pair of the kind
\[
(i,j) = \left( \left\lfloor  \frac{r_L + r_{L-1} b + \cdots r_{1}b^{L-1}}{b^L} P  \right\rfloor , 
\left\lfloor \frac{r_L + r_{L-1} b + \cdots r_{1}b^{L-1}}{b^L} Q  \right\rfloor \right).
\]
Actually this is sufficient to reach all elements of 
$\left\{ \left( \lfloor tP \rfloor, \lfloor tQ \rfloor  \right): 0\le t < 1 \right\}$.
Since $P$ and $Q$ are coprime the line $\{(tP,tQ): 0\le t < 1\}$ does not meet a lattice point 
different from $(0,0)$. Consequently line is cut into $P+Q-1$ intervals that correspond to 
its $P+Q-1$ elements. In each of this interval we could restrict ourselves to $b$-adic rational numbers 
$t$. This means that starting with $(0,0)$ we can reach every element of 
$\left\{ \left( \lfloor tP \rfloor, \lfloor tQ \rfloor  \right): 0\le t < 1 \right\}$.
Conversely if we start with the pair $\left( \lfloor tP \rfloor, \lfloor tQ \rfloor  \right)$ and if
$L$ is large enough then 
\[
\left( \left\lfloor \frac t{b^L} P  \right\rfloor , \left\lfloor \frac t{b^L} Q   \right\rfloor \right) = (0,0).
\]
Summing up this means that we have actually described the strongly connected component 
of the di-graph $D$ that contains $(0,0)$.
\end{proof}

As a corollary we obtain the following property that will be crucial for the proof of a non-trivial
upper bound of $F_\lambda^{0,0}(t)$.

\begin{corollary}
Let $\mathcal{C}$ be as above and assume that $b<P< Q$. Then there exists $i_0 < b$ such
that $\{(i_0,b-1),(i_0,b)\} \subseteq \mathcal{C}$.
\end{corollary}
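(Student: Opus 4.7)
The plan is to use the explicit parametrization $\mathcal{C} = \{(\lfloor tP\rfloor,\lfloor tQ\rfloor):0\le t<1\}$ from Lemma~\ref{Le2} and look at the value of $t$ at which the second coordinate jumps from $b-1$ to $b$, namely $t_0=b/Q$. Since $b<Q$, we have $t_0\in[0,1)$, so the point $(\lfloor t_0 P\rfloor,\lfloor t_0 Q\rfloor)=(\lfloor bP/Q\rfloor,b)$ belongs to $\mathcal{C}$. I would set $i_0:=\lfloor bP/Q\rfloor$ and argue that this $i_0$ does the job.

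To see that $i_0<b$, one just observes $bP/Q<b$, which is equivalent to $P<Q$ — one of the assumptions. Hence $(i_0,b)\in\mathcal{C}$ and $i_0<b$ as required.

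It remains to produce $(i_0,b-1)\in\mathcal{C}$. The natural candidate is $t=t_0-\varepsilon$ for small $\varepsilon>0$: then $\lfloor tQ\rfloor=b-1$, so I only need $\lfloor tP\rfloor=i_0$ for sufficiently small $\varepsilon$. This is where coprimality enters: since $\gcd(P,Q)=\gcd(b,Q)=1$, also $\gcd(bP,Q)=1$, and because $Q>1$ the rational number $bP/Q$ is not an integer. Therefore $t_0 P=bP/Q$ lies strictly between two consecutive integers, so $\lfloor (t_0-\varepsilon)P\rfloor=\lfloor t_0 P\rfloor=i_0$ for $\varepsilon>0$ small enough. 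This places $(i_0,b-1)$ in $\mathcal{C}$ as well.

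Putting the two observations together gives $\{(i_0,b-1),(i_0,b)\}\subseteq\mathcal{C}$ with $i_0<b$. There is no real obstacle here; the only subtlety is making sure $bP/Q$ is non-integral so that the floor of $tP$ does not drop when $t$ is perturbed slightly below $t_0$, and that is precisely what the coprimality hypotheses on $b,P,Q$ deliver.
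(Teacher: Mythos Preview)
Your proof is correct and follows essentially the same idea as the paper's, both resting on the description of $\mathcal{C}$ in Lemma~\ref{Le2}. The paper phrases it geometrically---$\mathcal{C}$ is a lattice path with unit right/up steps, so there is a unique vertical step $(i_0,b-1)\to(i_0,b)$, and $P<Q$ forces $i_0\le b-1$---while you compute $i_0=\lfloor bP/Q\rfloor$ explicitly from the parametrization and use coprimality to show the first coordinate does not drop just below $t_0=b/Q$; these are two presentations of the same argument.
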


\begin{proof}
By Lemma~\ref{Le2}, $\mathcal{C}$ can be considered as a lattice {\it path} from $(0,0)$ to $(P-1,Q-1)$
that is close to the diagonal and has only steps of the form $(i,j)\to (i+1,j)$ and  $(i,j)\to (i,j+1)$.
Thus, there is a unique step of the form $(i_0,b-1)\to(i_0,b)$. Since $P<Q$ it follows that $i_0\le b-1$.
\end{proof}

Next we use the relation (\ref{eqLe1}) to obtain proper vector recurrences for $F_\lambda^{i,j}(t)$.
Set 
\[
{\bf F}_\lambda(t) = \left( F_\lambda^{i,j}(t) \right)_{(i,j)\in \mathcal{C}}
\]
and 
\[
{\bf A}(t) = \left( a_{(i,j),(i',j')}(t) \right)_{(i,j),(i',j')\in \mathcal{C}}
\]
where
\[
a_{(i,j),(i',j')}(t) = \left\{ \begin{array}{cl} 
\frac 1b g(Pr+i \bmod b) \overline{ g(Qr+j \bmod b) } \e(-rt) & 
\mbox{for $(i',j') = \left( \left\lfloor \frac {i+rP}b \right\rfloor , \left\lfloor \frac {j+rQ}b \right\rfloor \right)$,}\\
0 &\mbox{else.}
\end{array} \right.
\]
Then (\ref{eqLe1}) rewrites to 
\[
{\bf F}_\lambda(t) =  {\bf A}(t) \cdot {\bf F}_{\lambda-1}(bt).
\]
Thus, we are led to study the product of matrices ${\bf A}(t)\cdot {\bf A}(bt)\cdots  {\bf A}(b^Lt)$.

Let $\| \cdot \|$ denote that row-sum-norm of a matrix. Then we have the following property.
\begin{lemma}\label{Le3}
Suppose that $g(n)$ is non-periodic.
There exist $L > 0$ and $\delta > 0$ such that 
\begin{equation}\label{eqLe3}
\sup_{t\in \R} \| {\bf A}(t)\cdot {\bf A}(bt)\cdots  {\bf A}(b^Lt) \| \le 1 - \delta.
\end{equation}
\end{lemma}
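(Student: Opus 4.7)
The plan is to prove Lemma~\ref{Le3} in two stages: a pointwise-in-$t$ contraction, then a compactness upgrade to uniform decay. As a warm-up, note that because $P,Q>b$, for any $(i,j)\in\mathcal{C}$ the map $r\mapsto\lfloor(i+rP)/b\rfloor$ is strictly increasing on $\{0,\dots,b-1\}$, so the $b$ successors of $(i,j)$ are distinct, every row of ${\bf A}(t)$ has exactly $b$ nonzero entries of modulus $1/b$, and $\|{\bf A}(t)\|_\infty=1$. In particular $\|\prod_{k=0}^{L}{\bf A}(b^{k}t)\|_\infty\le 1$ for all $t$ and $L$, by submultiplicativity of the row-sum norm.

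Fix $t\in\R$ and aim to show $\|\prod_{k=0}^{L}{\bf A}(b^{k}t)\|_\infty<1$ for some $L=L(t)$. Iterating Lemma~\ref{Le1} and telescoping the carries via strong $b$-multiplicativity, the entry of $\prod_{k=0}^{L}{\bf A}(b^{k}t)$ at $((i,j),(i',j'))$ becomes
\[
\frac{1}{b^{L+1}}\sum_{s\in S_{i',j'}}g\bigl((i+sP)\bmod b^{L+1}\bigr)\,\overline{g\bigl((j+sQ)\bmod b^{L+1}\bigr)}\,\e(-st),
\]
where $S_{i',j'}=\{s\in[0,b^{L+1}):\lfloor(i+sP)/b^{L+1}\rfloor=i',\,\lfloor(j+sQ)/b^{L+1}\rfloor=j'\}$. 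The row sum at $(i,j)$ is then at most $1$, with equality exactly when, within every fibre $S_{i',j'}$, all summands share a common phase.

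Suppose, for contradiction, that this phase coincidence holds for every $L$ at some row $(i,j)$. Apply the corollary preceding the lemma: there is $i_0<b$ with $\{(i_0,b-1),(i_0,b)\}\subseteq\mathcal{C}$. The associated corner of the lattice path of $\mathcal{C}$ from Lemma~\ref{Le2} produces, at arbitrarily large scales $L$, pairs of merging $s$-values $(s_1,s_2)$ whose enforced phase identities pit $g(\ell-1)$ against $g(\ell)$ (and the factor $\e((s_2-s_1)t)$ against $1$). Running these comparisons across the $b-1$ relevant digit transitions and across enough scales $L$, one is forced into the relations $g(\ell)=g(1)^{\ell}$ for $0\le\ell<b$ and $g(b-1)=1$, contradicting the non-periodicity of $g$ via~\eqref{eqn_periodic}.

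For the uniform upgrade, once $\|\prod_{k=0}^{L_{t_0}}{\bf A}(b^{k}t_0)\|_\infty\le 1-\eta_{t_0}$ is known for each $t_0$, continuity of the finite matrix product in $t$ extends the bound (say to $1-\eta_{t_0}/2$) on an open neighbourhood of $t_0$. Since ${\bf A}(t)$ depends on $t$ only through $\e(-rt)$, everything is $1$-periodic in $t$; compactness of $[0,1]$ furnishes a finite subcover, and with $L$ the maximum of the associated scales, padding shorter products by extra factors of row-sum norm $\le 1$ yields~\eqref{eqLe3} by submultiplicativity. The main obstacle lies in the pointwise step: translating the infinite family of phase-coincidence constraints into the finite set of periodicity relations~\eqref{eqn_periodic} requires care with the lattice-path combinatorics of $\mathcal{C}$ from Lemma~\ref{Le2} and with the coprimality $\gcd(P,b)=\gcd(Q,b)=1$ underlying the recursion.
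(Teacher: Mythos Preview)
Your two-stage plan---pointwise contraction followed by a compactness upgrade via the $1$-periodicity of ${\bf A}(t)$---is sound, your closed formula for the entries of the matrix product as a sum over $s\in S_{i',j'}$ is correct, and the compactness step is carried out properly.

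The gap is in the pointwise step, and you flag it yourself. You assume a \emph{single} row $(i,j)$ has row-sum $1$ at every scale and then invoke the pair $(i_0,b-1),(i_0,b)$ from the corollary to produce ``merging'' constraints. But the phase identity coming from two $s$-values in a single fibre has the shape
\[
g(\cdots)\,\overline{g(\cdots)}\,\e\bigl((s_1-s_2)t\bigr)=1,
\]
and you never say how the factor $\e((s_1-s_2)t)$ is removed. For a fixed $t$ this is a fixed unimodular constant, and one such relation does not force $g(\ell)=g(1)^\ell$; you need two constraints whose $t$-parts coincide and whose $g$-parts differ in a controlled way, so that dividing them yields a $t$-free relation on $g$.

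The paper does exactly this, but by working with \emph{two} rows at once: assuming for contradiction that both rows $(i_0,b-1)$ and $(i_0,b)$ achieve equality towards $(0,0)$, it builds for each $1\le r\le b-1$ an explicit pair of paths from each of these vertices to $(0,0)$ and notes that the resulting identities $v_1(r)=w_1$ and $v_2(r)=w_2$ share the same $t$-dependent factor $\e(-rt)\cdot A$; dividing gives $g(rQ\bmod b)=g(rQ-1\bmod b)\,g(1)\,\overline{g(b-1)}$, which is precisely the recursion yielding~\eqref{eqn_periodic}. Your sketch lacks this simultaneous use of two rows. In your framework the fix is available: if row $(i,j)$ has row-sum $1$ at scale $L+1$, then every successor of $(i,j)$ under $D_C$ has row-sum $1$ at scale $L$ (with $bt$ in place of $t$); by strong connectivity this forces both rows $(i_0,b-1)$ and $(i_0,b)$ to have row-sum $1$ at a common scale, after which the paper's cancellation applies verbatim.

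A final comparison: because the ratio $(v_1(r)/w_1)\big/(v_2(r)/w_2)$ is independent of $t$, the paper's route actually delivers a uniform $\delta$ from the start---non-periodicity gives an $r_0$ with this ratio equal to some $c\ne 1$, and then $2-\min\{|v_1(r_0)+w_1|,|v_2(r_0)+w_2|\}\ge 2-\max_{|\alpha|=1}\min\{|1+\alpha|,|1+c\alpha|\}>0$ for every $t$. So your compactness step, while correct, becomes unnecessary once the algebraic cancellation is in place.
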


\begin{proof}
We interpret the entries of the matrix ${\bf A}(t)\cdot {\bf A}(bt)\cdots  {\bf A}(b^Lt)$
in the following way. Let $D_C(t)$ be the strongly connected subgraph of $D$ corresponding to the
vertex set $\mathcal{C}$, where the edges of $D_C(t)$ 
\[
(i,j) \to \left( \left\lfloor \frac {i+rP}b \right\rfloor , \left\lfloor \frac {j+rQ}b \right\rfloor \right), \qquad
0\le r \le b-1,
\]
are labelled by  
\[
a_{(i,j),(\langle (i+rP)/b \rangle,\langle (j+rQ)/b \rangle )}(t)= \frac 1b g(Pr+i \bmod b) \overline{ g(Qr+j \bmod b) } \e(-rt).
\]
If $(e_0,e_1,\ldots,e_L)$ be a directed path in $D$ such that $e_j$ is actually an edge in $D_C(b^jt)$, $0\le j \le L$,
then we define the weight $w$ of this path by
\[
w(e_0,e_1,\ldots,e_L) = a_{e_0}(t) a_{e_1}(bt)\cdots a_{e_L}(b^Lt).
\]
Note that $|w(e_0,e_1,\ldots,e_L)| = b^{-L-1}$.
It the entries of ${\bf A}(t)\cdot {\bf A}(bt)\cdots  {\bf A}(b^Lt)$ are denoted by
$b_{L+1;(i,j),(i'j')}(t)$ then we have by definition
\[
b_{L+1;(i,j),(i'j')}(t) = \sum w(e_0,e_1,\ldots,e_L),
\]
where the sum is taken over all directed paths $(e_0,e_1,\ldots,e_L)$ in $D$ that connect $(i,j)$ and $(i',j')$
such that $e_j$ is an edge in $D_C(b^jt)$, $0\le j \le L$.

For $(i,j),(i',j')\in \mathcal{C}$ let $B_{L+1}(i,j),(i'j')$ denote the number of different paths
from $(i,j)$ to $(i',j')$. Clearly we have 
\[
\sum_{(i',j')\in \mathcal{C}}
B_{L+1}((i,j),(i'j')) = b^{L+1}.
\]
Hence
\[
\sum_{(i',j')\in \mathcal{C}} \left| b_{(i,j),(i'j')}(t) \right| \le b^{L-1} \sum_{(i',j')\in \mathcal{C}}
B_{L+1}((i,j),(i'j')) = 1. 
\]
Note that this just says that $\| {\bf A}(t)\cdot {\bf A}(bt)\cdots  {\bf A}(b^Lt) \|  \le 1$
Furthermore, in order to prove (\ref{eqLe3}) we just have to show that
for every $(i,j)\in \mathcal{C}$ there exists $(i',j')\in \mathcal{C}$ with
\begin{equation}\label{eqgoal}
\left| b_{L+1;(i,j),(i'j')}(t) \right| <  b^{-L-1} B_{L+1}((i,j),(i'j')).
\end{equation}

In order to prove (\ref{eqgoal}) we proceed in two steps. 
We first show that there exist $L\ge 1$ such that
\begin{equation}\label{eqgoal1}
\left| b_{L+1;(i_0,b-1),(0,0)}(t) \right| 
<  b^{-L-1} B_{L+1}((i_0,b-1),(0,0))
\end{equation}
or 
\begin{equation}\label{eqgoal2}
\left| b_{(i_0,b),(0,0)}(t) \right| 
<  b^{-L-1} B_{L+1}((i_0,b),(0,0)).
\end{equation}

Since $D_C(t)$ is strongly connected it is clear that these properties imply (\ref{eqgoal})
for some $L> 0$.
We first define define $L_1$ the minimal $n$ such that for every pair $((i,j),(i',j'))\in \mathcal{C}$
there exists a path of length $L_1$ that connects $(i,j)$ and $(i',j')$. (Since there is s loop
from $(0,0)$ to itself, there are such $n$.) Second we define $L_2$ as the smallest $L$ such that
the above construction works. Then for every $(i,j)\in\mathcal{C}$ there are two paths $p_1,p_2$ of length $L_1$
that connect $(i,j)$ to $(0,1)$ and $(i,j)$ to $(0,2)$, respectively. 
This shows that $B_{L_1}((i,j),(0,1))>0$ and $B_{L_1}((i,j),(0,2))>0$.
Consequently we have 
\begin{align*}
\left| b_{L_1+L_2+1;(i,j),(0,0)}(t) \right| &= 
\left| \sum_{(i',j')\in \mathcal{C}} b_{L_1;(i,j),(i',j')}(t) b_{L_2+1;(i',j'),(0,0)}(b^{L_1}t) \right| \\
&< b^{-L_1-L_2-1} \sum_{(i',j')\in \mathcal{C}}  B_{L_1}((i,j),(i',j'))B_{L_2+1}((i',j'),(0,0)) \\
&=  b^{-L_1-L_2-1}  B_{L_1+L_1+1}((i,j),(0,1))
\end{align*}
since $(i',j') = (i_0,b-1)$ or $(i',j') = (i_0,b)$ appears in this sum with a non-zero contribution,
and so (\ref{eqgoal}) follows.

We fix some $1\le r \le b-1$ and consider two paths from $(i_0,b-1)$ to $(0,0)$ and
two from $(i_0,b)$ to $(0,0)$, respectively:
\begin{align*}
&(i_0,b-1) \to \left( \left\lfloor \frac{i_0+rP}b  \right\rfloor, \left\lfloor \frac{rQ+b-1}b \right\rfloor \right) \to  
\left( \left\lfloor \frac{i_0+rP}{b^2} \right\rfloor, \left\lfloor  \frac{rQ+b-1}{b^2}   \right\rfloor \right) \to \cdots\\
& \qquad \qquad  \to
\left( \left\lfloor \frac{i_0+rP}{b^L} \right\rfloor, \left\lfloor \frac{rQ+b-1}{b^L} \right\rfloor \right) = (0,0),\\
&(i_0,b-1) \to (0,0) \to \cdots \to (0,0), \\
&(i_0,b)\to \left( \left\lfloor \frac{i_0+rP}b  \right\rfloor, \left\lfloor \frac{rQ+b}b \right\rfloor \right) = 
\left( \left\lfloor \frac{i_0+rP}b  \right\rfloor, \left\lfloor \frac{rQ+b-1}b \right\rfloor \right) \to  \\ 
&\qquad \qquad 
\left( \left\lfloor \frac{i_0+rP}{b^2} \right\rfloor, \left\lfloor  \frac{rQ+b-1}{b^2}   \right\rfloor \right) \to \cdots \to
\left( \left\lfloor \frac{i_0+rP}{b^L} \right\rfloor, \left\lfloor \frac{rQ+b-1}{b^L} \right\rfloor \right) = (0,0) ,\\
&(i_0,b) \to (0,1) \to (0,0) \to \cdots \to (0,0),
\end{align*}
where $L$ is chosen in a way that $b^{L+1} > \max\{P+1,Q+1\}$.
Here we have used the facts that (since by assumption $rQ/b$ is not an integer)
\[
\left\lfloor \frac{rQ+b-1}b \right\rfloor = \left\lfloor \frac{rQ+b}b \right\rfloor
\]
and that for all non-negative integers $a$
\[
\left\lfloor \frac{\lfloor a/b \rfloor }b \right\rfloor = \left\lfloor \frac{a}{b^2} \right\rfloor.
\]

The weights of these paths are given by
\[
v_1(r):=g(i_0+rP \bmod b) \overline { g(rQ-1 \bmod b ) } \e(-rt) A, \quad w_1:=g(i_0) \overline{g(b-1) } 
\]
and by
\[
v_2(r):=g(i_0+rP \bmod b) \overline { g(rQ\bmod b ) } \e(-rt) A, \quad
w_2:= g(i_0) \overline{g(1) } 
\]
where
\[
A = \prod_{j=1}^{L+1} g\left( \lfloor (i_0 + rP)/b^j \rfloor \right) \overline {  g\left( \lfloor (rQ+b-1)/b^j \rfloor \right) }.
\]
Thus we have
\[
\left| b_{(i_0,b-1),(0,0)}(t) \right| \le b^{-L-1} \left( B_{L+1}((i_0,b-1),(0,0))-2+ |v_1(r) + w_1| \right)
\]
and
\[
\left| b_{(i_0,1),(0,0)}(t) \right| \le b^{-L-1} \left( B_{L+1}((i_0,b),(0,0))-2+ |v_2(r)+w_2| \right)
\]
Thus, in order to prove Lemma~\ref{Le3} we just have to check that there exist $1\le r \le b-1$ with
\begin{equation}\label{eqtoshow}
\min\{ |v_1(r) + w_1|, |v_2(r) + w_2| \} < 2.
\end{equation}

Suppose that the converse statement holds, that is, for all $1\le r \le b-1$ we have
\[
|v_1(r) + w_1| = |v_2(r) + w_2| = 2.
\]
Then we would have (for all $1\le r \le b-1$)
\[
v_1(r)/w_1 = v_2(r)/w_2
\]
or equivalently 
\[
g(rQ \bmod b) = g(rQ-1 \bmod b) g(1) \overline { g(b-1) }.
\]
Since $rQ \bmod b$, $1\le r\le b-1$, runs precisely through the residue classes $1\le \ell \le b-1$ this implies
\[
g(\ell) = g(\ell-1) g(1) \overline { g(b-1) }.
\]
By setting $\ell = 1$ it follows that $g(b-1) = 1$ (since $g(0)=1$) and consequently we have
\[
g(\ell) = g(1)^\ell, \qquad 1\le \ell \le b-1.
\]
Recall that $g(b-1) = 1$. Hence, we have $g(\ell) = \e(\ell j_0/(b-1))$ for some $j_0$ and consequently
$g(n)$ is periodic. This is of course a contradiction and so (\ref {eqtoshow}) (and consequently Lemma~\ref{Le3}) follows.
\end{proof}

This finally implies the main result of this section.

\begin{proposition}\label{prp_fourierprop}
There exist constants $C > 0$ and $\eta > 0$ such that 
for all $\lambda\ge 0$
\[
\sup_{t\in \R} \left| F_\lambda(t) \right| \le C\, e^{-\eta \lambda}.
\]
\end{proposition}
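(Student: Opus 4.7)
The proposition is essentially a consequence of Lemma~\ref{Le3} via iteration of the vector recurrence ${\bf F}_\lambda(t) = {\bf A}(t) \cdot {\bf F}_{\lambda-1}(bt)$ derived from Lemma~\ref{Le1}. The plan is to iterate this recurrence all the way down and group the resulting matrix factors into blocks of length $L+1$, where $L$ is the integer provided by Lemma~\ref{Le3}.

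First I would unfold the recurrence to obtain
\[
{\bf F}_\lambda(t) = {\bf A}(t)\cdot {\bf A}(bt) \cdots {\bf A}(b^{\lambda-1}t) \cdot {\bf F}_0(b^\lambda t).
\]
The initial vector ${\bf F}_0(s)$ has entries $F_0^{i,j}(s) = g(i)\overline{g(j)}$, which have modulus $1$; hence $\|{\bf F}_0(b^\lambda t)\|_\infty \le 1$ uniformly in $t$. Since the coordinate $F_\lambda^{0,0}(t) = F_\lambda(t)$ is one of the components of ${\bf F}_\lambda(t)$, it suffices to bound $\|{\bf F}_\lambda(t)\|_\infty$, and by compatibility of the row-sum norm with the $\|\cdot\|_\infty$-norm on vectors, this reduces to bounding the row-sum norm of the matrix product.

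Next I would write $\lambda = k(L+1) + r$ with $0 \le r \le L$ and split the product into $k$ consecutive blocks of $L+1$ factors, followed by a tail of length $r$. For the $m$-th block ($0 \le m < k$), substituting $s_m = b^{m(L+1)}t$ gives
\[
{\bf A}(b^{m(L+1)}t)\cdot {\bf A}(b^{m(L+1)+1}t) \cdots {\bf A}(b^{m(L+1)+L}t) = {\bf A}(s_m)\cdot {\bf A}(bs_m)\cdots {\bf A}(b^L s_m),
\]
whose row-sum norm is at most $1-\delta$ by Lemma~\ref{Le3}, uniformly in $s_m$. For the tail and for the bound $\|{\bf A}(t)\| \le 1$ in general I would invoke the observation made inside the proof of Lemma~\ref{Le3} (the summation identity $\sum_{(i',j') \in \mathcal{C}} B_{1}((i,j),(i',j')) = b$ combined with $|a_{(i,j),(i',j')}(t)| \le 1/b$). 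Submultiplicativity of the row-sum norm then yields
\[
\sup_{t \in \R} \|{\bf A}(t) \cdots {\bf A}(b^{\lambda-1}t)\| \le (1-\delta)^k \le \frac{1}{1-\delta}(1-\delta)^{\lambda/(L+1)},
\]
and setting $\eta = -\log(1-\delta)/(L+1) > 0$ and $C = 1/(1-\delta)$ gives the required bound $|F_\lambda(t)| \le C e^{-\eta \lambda}$.

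There is no serious obstacle here, as the genuine work (the spectral-radius-type contraction) has already been performed in Lemma~\ref{Le3}; the only small point to verify carefully is that $\|{\bf A}(t)\| \le 1$ holds unconditionally (so that the tail of length $r < L+1$ does not spoil the estimate) and that ${\bf F}_0$ has unit $\infty$-norm, both of which follow directly from the definitions.
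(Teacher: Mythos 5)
Your proof is correct and follows essentially the same route as the paper: iterate ${\bf F}_\lambda(t)={\bf A}(t){\bf F}_{\lambda-1}(bt)$, group the matrix factors into blocks of length $L+1$, apply Lemma~\ref{Le3} to each block and the trivial bound $\|{\bf A}(t)\|\le 1$ to the tail, giving $(1-\delta)^{\lfloor\lambda/(L+1)\rfloor}$. Your added care about the tail of length $r\le L$ and the unit norm of ${\bf F}_0$ is exactly what the paper's floor function and final inequality implicitly rely on.
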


\begin{proof}
It follows from Lemma~\ref{Le3} that 
\[
\| {\bf A}(t)\cdot {\bf A}(bt)\cdots  {\bf A}(b^\lambda t) \| \le (1-\delta)^{\lfloor \lambda/(L+1) \rfloor} 
\]
This implies that
\[
\left| F_\lambda(t) \right| \le \| {\bf F}_\lambda(t) \| \le (1-\delta)^{\lfloor \lambda/(L+1) \rfloor} 
\| {\bf F}_0(t) \| \le C\, e^{-\eta \lambda}
\]
holds uniformly for all $t\in \R$.
\end{proof}

\section{Proof of Theorem~\ref{Th2}}\label{secTh2}

The essential step in the proof of Theorem~\ref{Th2} is the application of a theorem by the second and third authors~\cite[Theorem~1]{MR2018}.

Assume that $g$ is a non-periodic strongly $b$-multiplicative function of modulus $1$.
By our Proposition~\ref{prp_fourierprop}, the function 
$f$ defined by $f(n)=g(p^2n)\overline{g(q^2n)}$ belongs to the set
$\mathcal F_{\gamma,c}$ defined in~\cite[Definition~4]{MR2018}, where $c>0$ is arbitrary and $\gamma(\lambda)$ is maximal such that $Cb^{-\eta\lambda}\leq b^{-\gamma(\lambda)}$ for all $\lambda\geq 0$. (here $C$ and $\eta$ are as in Proposition~\ref{prp_fourierprop}).
Clearly, $\gamma(\lambda)\gg \eta\lambda$.

In order to apply Theorem~1 from~\cite{MR2018}, it is therefore sufficient to verify a \emph{carry property}~\cite[Definition~3]{MR2018} for the function $f$.
For this, we define, for any function $h:\mathbb N\rightarrow\mathbb C$ and $\lambda\geq 0$, the \emph{truncation} $h_\lambda$ as the 
$b^\lambda$-periodic continuation of $h\mid [0,b^\lambda)$. 
This function only takes into account the digits with indices below $\lambda$.

\begin{lemma}
Assume that $g$ is a non-periodic strongly $b$-multiplicative function of modulus $1$.
Define $f(n)=g(p^2n)\overline{g(q^2n)}$.
There exists $C>0$ such that for all nonnegative integers $\lambda,\kappa,\rho$ satisfying $\rho<\lambda$,
the number of integers $0\leq \ell<b^\lambda$ such that
\begin{equation}\label{eqn_carry}
f(\ell b^\kappa+k_1+k_2)\overline{f(\ell b^\kappa+k_1)}\neq
f_{\kappa+\rho}(\ell b^\kappa+k_1+k_2)\overline{f_{\kappa+\rho}(\ell b^\kappa+k_1)}
\end{equation}
for some $(k_1,k_2)\in\{0,\ldots,b^\kappa-1\}^2$ is bounded by
$Cb^{\lambda-\rho}$.
\end{lemma}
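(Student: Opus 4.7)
The plan is to show that for all but $O(b^{\lambda-\rho})$ values of $\ell<b^\lambda$, the ratio $f(n)/f_{\kappa+\rho}(n)$ takes the same value at $n=\ell b^\kappa+k_1$ and at $n=\ell b^\kappa+k_1+k_2$; this will make \eqref{eqn_carry} an equality. Bad $\ell$ will be characterized by an exclusion set of residues modulo $b^\rho$ of constant size (depending only on $p$, $q$, $b$).

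\textbf{Decomposition.} I will write $\ell=\alpha b^\rho+\beta$ with $\alpha=\lfloor\ell/b^\rho\rfloor$, $\beta=\ell\bmod b^\rho$. For $k<b^\kappa$, I decompose $p^2k=c_p(k)b^\kappa+r_p(k)$ with $r_p(k)<b^\kappa$ and $c_p(k)\in\{0,\ldots,p^2-1\}$, so that $p^2(\ell b^\kappa+k)=(p^2\ell+c_p(k))b^\kappa+r_p(k)$. Strong $b$-multiplicativity then yields $g(p^2(\ell b^\kappa+k))=g(r_p(k))\,g(p^2\ell+c_p(k))$, and since $(\ell b^\kappa+k)\bmod b^{\kappa+\rho}=\beta b^\kappa+k$, the analogous identity with $\ell$ replaced by $\beta$ holds for the truncation. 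After doing the same for $q^2$ and cancelling the $g(r_p(k))$ and $g(s_q(k))$ factors, one obtains
\[
\frac{f(\ell b^\kappa+k)}{f_{\kappa+\rho}(\ell b^\kappa+k)}=\frac{g(p^2\ell+c_p(k))}{g(p^2\beta+c_p(k))}\cdot\overline{\frac{g(q^2\ell+c_q(k))}{g(q^2\beta+c_q(k))}}.
\]
Writing $p^2\beta+c_p(k)=q_p\,b^\rho+d_p$ with $d_p<b^\rho$, so that $p^2\ell+c_p(k)=(p^2\alpha+q_p)b^\rho+d_p$, a second application of strong $b$-multiplicativity will reduce the first fraction to $g(p^2\alpha+q_p)/g(q_p)$; analogously for the $q^2$ factor.

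\textbf{Good residues.} I will call $\beta$ \emph{good} if $p^2\beta\bmod b^\rho<b^\rho-2p^2+1$, $q^2\beta\bmod b^\rho<b^\rho-2q^2+1$, and $\beta\ne b^\rho-1$. The first inequality guarantees both that $q_p=\lfloor p^2\beta/b^\rho\rfloor$ is independent of $c_p(k)\in\{0,\ldots,p^2-1\}$, and that this value of $q_p$ is unchanged when $\beta$ is replaced by $\beta+1$. The latter shift arises precisely when $k_1+k_2\ge b^\kappa$: in that case we rewrite $n_2=(\ell+1)b^\kappa+(k_1+k_2-b^\kappa)$ and the residue appearing in the formula becomes $\beta+1<b^\rho$ (the condition $\beta\ne b^\rho-1$ is what ensures that $\alpha$ itself does not change). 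The $q^2$ condition plays the same role on the second factor. Thus, for good $\beta$, the ratio $f(n)/f_{\kappa+\rho}(n)$ equals the same constant $g(p^2\alpha+q_p)\,\overline{g(q^2\alpha+q_q)}/\bigl(g(q_p)\,\overline{g(q_q)}\bigr)$ for both $n=n_1:=\ell b^\kappa+k_1$ and $n=n_2$, and \eqref{eqn_carry} becomes an identity.

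\textbf{Counting.} Since $\gcd(p,b)=\gcd(q,b)=1$, multiplication by $p^2$ (resp.\ $q^2$) is a bijection of $\Z/b^\rho\Z$, so the residues violating the first (resp.\ second) inequality number at most $2p^2-1$ (resp.\ $2q^2-1$). Together with the value $b^\rho-1$, the bad residues form a set of size at most $2p^2+2q^2-1$, independent of $\rho$. Each bad residue contributes $b^{\lambda-\rho}$ values of $\ell<b^\lambda$, which gives the required bound (and when $b^\rho<2p^2+2q^2$ the trivial inequality $b^\lambda\le(2p^2+2q^2)b^{\lambda-\rho}$ covers the remaining range). The hard part will be the bookkeeping in the second step: the carry $c_0\in\{0,1\}$ produced by $k_1+k_2\ge b^\kappa$ couples the variation of $k$ with a unit shift of $\beta$, and the correct threshold $b^\rho-2p^2+1$ --- rather than the naive $b^\rho-p^2+1$ --- is what allows both phenomena to be absorbed into a single exclusion set of bounded size.
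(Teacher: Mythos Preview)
Your proof is correct and follows essentially the same idea as the paper: bound the set of $\ell$ for which the carry generated by adding $k_2$ can propagate past digit $\kappa+\rho$. The paper's version is terser --- it first reduces to a single factor $f(n)=g(an)$ (since failure of~\eqref{eqn_carry} for the product forces failure for one of $g(p^2\cdot)$ or $g(q^2\cdot)$), then observes directly that whenever $a\ell\bmod b^\rho<b^\rho-2a$ the addition $a\ell b^\kappa+ak_1+ak_2$ produces no carry beyond position $\kappa+\rho$. Your argument keeps both factors and unpacks the ratio $f/f_{\kappa+\rho}$ explicitly via strong $b$-multiplicativity, arriving at the same exclusion condition $p^2\beta\bmod b^\rho<b^\rho-2p^2+1$ (and its $q$-analogue); this is more bookkeeping but also more transparent, in particular in how the case $k_1+k_2\ge b^\kappa$ forces the shift $\beta\mapsto\beta+1$ and hence the factor $2$ in the threshold. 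The two routes yield the same bound $O\bigl((p^2+q^2)\,b^{\lambda-\rho}\bigr)$.
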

\begin{proof}
Separating the factors corresponding to $p$ and $q$, it is sufficient to verify this property for the function $f(n)=g(an)$, where $a\geq 0$.
We need to investigate the carry propagation occurring in the addition
$s_1+s_2$, where $s_1=a\ell b^\kappa+ak_1$ and $s_2=ak_2$. 
If $s_1\in [0,b^{\kappa+\rho}-ab^\kappa)+b^{\kappa+\rho}\mathbb N$,
the addition of $s_2$ does not change the base-$b$ digits of $s_1$ above $\kappa+\rho$; it is therefore sufficient to demand that 
$a\ell\in [0,b^\rho-2a)+b^\rho\mathbb N$ 
in order to obtain equality in~\eqref{eqn_carry} for all $k_1,k_2$.
For $\rho$ large enough, this condition is violated for
$\mathcal O(b^{\lambda}a/b^\rho)$ many $\ell<b^\lambda$, which implies the statement.
\end{proof}

Applying Mauduit and Rivat's Theorem~1~\cite{MR2018}, we obtain
\[\sum_{0\leq n<N}g(p^2n^2)\overline{g(q^2n^2)}=o(N)\]
for all strongly $b$-multiplicative functions $g$ of modulus $1$ and coprime $p$ and $q$ that are also coprime to $b$.
As a final step, we apply the Daboussi--K\'atai criterion~\cite{BSZ2013,K1986}
\begin{lemma}[Daboussi--K\'atai/Bourgain--Sarnak--Ziegler]
  \label{lemma:daboussi-katai}
Let $f:\mathbb N\rightarrow\mathbb C$ be bounded and such that
\begin{equation}\label{eqn_DK_sufficient}
\sum_{n \leq x} f(pn) \overline{f(qn)} = o(x)
\end{equation}
for all distinct primes $p$ and $q$. Then
\[\sum_{n \leq x} \mu(n)f(n) =o(x).\]
\end{lemma}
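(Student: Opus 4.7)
The plan is to apply the standard Bourgain--Sarnak--Ziegler / Kátai-type argument. I would fix a (large but finite) set $\mathcal P$ of primes, set $\Pi = \sum_{p \in \mathcal P} 1/p$, and define $\omega_{\mathcal P}(n) = \#\{p \in \mathcal P : p \mid n\}$. The Turán--Kubilius inequality yields
\[\sum_{n \leq x} \bigl(\omega_{\mathcal P}(n) - \Pi\bigr)^2 \ll x\,\Pi,\]
so by Cauchy--Schwarz in $n$ we have
\[\Pi \sum_{n \leq x} \mu(n)\, f(n) = \sum_{n \leq x} \mu(n)\, f(n)\, \omega_{\mathcal P}(n) + O\bigl(x\sqrt{\Pi}\bigr).\]

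Next I would unfold the weighted sum using $\omega_{\mathcal P}(n) = \sum_{p \in \mathcal P,\, p\mid n} 1$. Writing $n = pm$ and using the identity $\mu(pm) = -\mu(m)$ when $p \nmid m$, while the $p \mid m$ contribution is bounded by $\sum_{p\in\mathcal P} x/p^2 = O(x)$, we obtain
\[\sum_{n \leq x} \mu(n)\, f(n)\, \omega_{\mathcal P}(n) = -\sum_{p \in \mathcal P} \sum_{m \leq x/p} \mu(m)\, f(pm) + O(x).\]
Applying Cauchy--Schwarz in $m$ and expanding the square then gives
\[\left| \sum_{p \in \mathcal P} \sum_{m \leq x/p} \mu(m)\, f(pm) \right|^2 \leq x \sum_{p,q \in \mathcal P} \sum_{m \leq x/\max(p,q)} f(pm)\,\overline{f(qm)}.\]
The diagonal ($p = q$) contributes at most $x \cdot \sum_{p \in \mathcal P} x/p \ll x^2 \Pi$, while for each fixed pair of distinct primes $(p,q) \in \mathcal P^2$, the inner sum is $o(x)$ as $x \to \infty$ by the hypothesis~\eqref{eqn_DK_sufficient}; summing these $|\mathcal P|^2$ terms gives a total off-diagonal contribution of $o_{\mathcal P}(x^2)$.

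Collecting these estimates yields $\Pi \bigl|\sum_{n \leq x}\mu(n)\, f(n)\bigr| \ll x\sqrt{\Pi} + o_{\mathcal P}(x)$, hence
\[\limsup_{x\to\infty} \frac{1}{x}\Bigl| \sum_{n \leq x} \mu(n)\, f(n) \Bigr| \ll \Pi^{-1/2}.\]
Choosing $\mathcal P$ to be all primes up to a threshold $K$ and letting $K \to \infty$ makes $\Pi \to \infty$ (divergence of the sum of reciprocals of primes), finishing the proof. The main subtlety to be careful about is the order of limits: the set $\mathcal P$ must be fixed \emph{before} sending $x \to \infty$, since~\eqref{eqn_DK_sufficient} only provides $o(x)$ for each individual pair of primes, so the off-diagonal error depends on $|\mathcal P|$ and is only harmless in this order. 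The combinatorial manipulations involving $\mu$ and the $p \nmid m$ restriction are the only other points where care is needed, but those produce errors of size $O(x)$ which are negligible after division by $\Pi \to \infty$.
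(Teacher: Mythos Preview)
Your sketch is a correct outline of the classical K\'atai argument: the Tur\'an--Kubilius step, the unfolding of $\omega_{\mathcal P}(n)$, the handling of the $p\mid m$ terms via $\sum_p 1/p^2<\infty$, the Cauchy--Schwarz in $m$, the diagonal/off-diagonal split, and the final order-of-limits remark are all carried out properly. One cosmetic point: in the display after Cauchy--Schwarz the right-hand side is automatically real and nonnegative (it equals $\sum_m\lvert\sum_p f(pm)\rvert^2$), so the inequality is meaningful as written.

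The paper itself does not give a proof of this lemma at all; it simply states the result and cites K\'atai~\cite{K1986} and Bourgain--Sarnak--Ziegler~\cite{BSZ2013}, adding only the observation that the BSZ proof needs merely primes above an arbitrary threshold. So there is no paper proof to compare against: you have supplied the standard argument behind the cited references, which is appropriate and correct.
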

In fact it is sufficient to restrict the condition~\eqref{eqn_DK_sufficient} to large enough primes $p$ and $q$ --- Bourgain--Sarnak--Ziegler~\cite[page~80]{BSZ2013} note that their proof only involves primes larger than an arbitrary bound.
Applying this lemma to $f(n)=g(n^2)$, we obtain
\[\sum_{0\leq n<N}\mu(n)g(n^2)=o(N)\]
and therefore our Theorem~\ref{Th2}.

\subsection*{Acknowledgements}
The authors wish to thank Mariusz Lema\'nczyk for pointing out valuable references to the literature concerning M\"obius orthogonality of positive entropy dynamical systems, and helping us place our result into the context of related research works.
Moreover, we thank Clemens M\"ullner for several fruitful discussions.
%
%
%
%
%
%
%

\bibliographystyle{siam}
\bibliography{DMRS}
\bigskip
\end{document}